\documentclass[11pt]{article}

\usepackage[T1]{fontenc}
\usepackage{amsmath,amssymb,amsthm}
\usepackage{graphicx}
\usepackage{tikz}
\usepackage[a4paper,margin=1in]{geometry}
\usepackage[numbers,sort&compress]{natbib}
\usepackage[colorlinks=true,allcolors=blue]{hyperref}

\newtheorem{theorem}{Theorem}
\newtheorem{lemma}[theorem]{Lemma}
\newtheorem{proposition}[theorem]{Proposition}
\newtheorem{corollary}[theorem]{Corollary}
\theoremstyle{definition}
\newtheorem{definition}{Definition}

\newcommand{\qd}{\operatorname{qd}}
\newcommand{\score}{\operatorname{sc}}

\title{The maximum quartet distance between phylogenetic trees}
\author{Lior Pachter\thanks{The author used GPT-5.6 to prove the theorem and
to draft an initial version of this manuscript.  The author assisted with the
proof strategy, verified the mathematical arguments, edited the manuscript,
and takes full responsibility for the content.}\\
Division of Biology and Biological Engineering\\
Department of Computing and Mathematical Sciences\\
California Institute of Technology\\
Correspondence should be addressed to
\href{mailto:lpachter@caltech.edu}{\texttt{lpachter@caltech.edu}}.}
\date{}

\begin{document}
\maketitle

\begin{abstract}
The quartet distance counts the four-leaf subsets on which two binary
phylogenetic trees display different topologies.  We prove that its maximum
over trees on $n$ leaves is
\[
 \left(\frac23+o(1)\right)\binom n4,
\]
resolving a conjecture of Bandelt and Dress from 1986 and calibrating the
scale of a fundamental metric for comparing phylogenetic trees.  The proof
reduces arbitrary pairs of trees to caterpillars by means of a common-root
planarization and an identity on five-leaf trees.
\end{abstract}

\section{Introduction}

The quartet distance was introduced by Estabrook, McMorris, and Meacham
as a measure of dissimilarity between phylogenetic trees
\cite{Estabrook_1985}.  Every four-element subset of the leaf set induces
one of three resolved quartet topologies, and the distance between two
binary trees is the number of subsets on which those topologies differ.
Beyond their use in comparing trees, quartets also underlie reconstruction
methods; in particular, neighbor joining admits a robust quartet
interpretation \cite{Mihaescu_2009}.
The extremal problem is therefore to determine the largest possible
distance between two trees on the same $n$ leaves.

Bandelt and Dress proved that this maximum is strictly smaller than
$\frac{14}{15}\binom n4$ for $n\,\geq\,6$ and conjectured that its asymptotic
value is $\frac23\binom n4$ \cite{Bandelt_1986}.  The constant $2/3$ is
forced by a random relabelling: for every fixed quartet in one tree, a
uniformly relabelled second tree displays each of its three topologies
with probability $1/3$.  Related distributional properties of the
quartet metric were subsequently studied by Steel and Penny
\cite{Steel_1993}.

Progress on the upper bound came much later.  Alon, Snir, and Yuster
proved an asymptotic upper bound of
$\left(\frac9{10}+o(1)\right)\binom n4$
\cite{Alon_2014}.  Alon, Naves, and Sudakov then used flag algebras
\cite{Razborov_2007} to improve this to
$\left(0.69+o(1)\right)\binom n4$ and proved the conjectured $2/3$ bound
when both trees are caterpillars \cite{Alon_2016}.  Chor, Erd\H{o}s, and Komornik
gave an explicit family of pairs of complete balanced trees whose
distance is asymptotic to the conjectured value \cite{Chor_2019}.
Snir, Weissberg, and Yuster studied a partial-information extension of
the extremal problem and showed that the analogue of the Bandelt--Dress
phenomenon need not hold when some taxa have prescribed locations
\cite{Snir_2021}.  More recently, Czabarka, Kelk, Moulton, and Sz\'ekely
realized quartet distance as one endpoint of a family of tree metrics
arising from coconvex characters, relating their diameters to extremal
character counts \cite{Czabarka_2026}.

We prove the conjecture by reducing the problem for arbitrary pairs of
trees to the caterpillar case.

\begin{definition}[phylogenetic trees and displayed quartets]
\label{def:phylogenetic-tree}
An \emph{unrooted binary phylogenetic tree} on a finite set $X$ is a tree
whose leaves are bijectively labelled by $X$ and whose internal vertices
have degree three.  For $Q\in\binom X4$, let $T|_Q$ be the quartet topology
obtained from the minimal subtree spanning $Q$ by suppressing vertices of
degree two.
\end{definition}

\begin{definition}[quartet distance and its maximum]
\label{def:quartet-distance}
For two binary phylogenetic trees on $X$, define
\[
 \qd(T_1,T_2)
 \,=\,\bigl|\{Q\in\tbinom X4:T_1|_Q\,\ne\,T_2|_Q\}\bigr|.
\]
For $|X|\,=\,n$, let $M_n$ be the maximum of this distance over all pairs
of trees on $X$.
\end{definition}

\begin{theorem}\label{thm:main}
The maximum quartet distance satisfies
\[
 M_n\,=\,\left(\frac23+o(1)\right)\binom n4.
\]
\end{theorem}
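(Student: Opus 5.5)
The lower bound is already available: the random relabelling argument (or the explicit balanced-tree family of Chor, Erd\H{o}s and Komornik) gives $M_n\ge(\frac23-o(1))\binom n4$. So the work is the upper bound $\qd(T_1,T_2)\le(\frac23+o(1))\binom n4$ for arbitrary binary trees. I will deduce it from the caterpillar case of Alon, Naves and Sudakov, which I take as given in the form: for any two caterpillars on $n$ leaves, the number of quartets on which they disagree is at most $(\frac23+o(1))\binom n4$.

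\textbf{Step 1: common-root planarization.} Root both trees at a common leaf $r$ (or subdivide an edge). Choose a planar embedding of each rooted tree, which gives a linear order of the leaves. A rooted binary tree with a fixed planar embedding is determined on each quartet by which adjacent pair in the left-to-right order is ``cherried first'' relative to the root. The caterpillar whose leaf order is this planar order reproduces some quartets of the tree correctly and misses others. I therefore compare $T_i$ with the caterpillar $C_i$ along its planar order, for suitably \emph{random} embeddings: at each internal node, flip the two children independently with probability $1/2$. The caterpillar pair $(C_1,C_2)$ is then a random pair of caterpillars, and the caterpillar bound holds for every realization.

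\textbf{Step 2: five-leaf identity.} Quartet agreement for a 4-set $Q$ between $T_1$ and $T_2$ should be expressible, in expectation over the random flips, as an affine combination of caterpillar agreements. Because rooting at $r$ turns a quartet into a 5-leaf rooted configuration $Q\cup\{r\}$, the key computation is an identity on five-leaf trees. For each pair of rooted 5-leaf shapes (caterpillar/caterpillar, caterpillar/balanced, and so on) and each combination of flips, I would verify that
\[\Pr[T_1|_Q=T_2|_Q]\ \ge\ \mathbb E\bigl[\mathbf 1\{C_1|_Q=C_2|_Q\}\bigr]\cdot\alpha+\beta,\]
with constants chosen so that averaging over $Q$ and over the root choice turns the caterpillar bound $\text{agree}\ge(\frac13-o(1))\binom n4$ into the same bound for $T_1,T_2$. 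The ideal outcome is an exact identity: $\mathbb E[\text{agreement of }C_1,C_2 \text{ on }Q]$ equals a fixed multiple of $\mathbf 1\{T_1|_Q=T_2|_Q\}$ plus a term that is the same for all tree pairs (for instance $1/3$ weighted). This would make the reduction lossless. It is a finite check over rooted 5-leaf shape pairs and all relabellings.

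\textbf{Step 3: averaging.} Sum the identity over $Q$ (and average the root $r$ over all leaves if needed, so that the $O(n^3)$ quartets containing $r$ and any root-dependence are absorbed into $o(n^4)$). Then apply the caterpillar theorem to each realized caterpillar pair and take expectations. This yields $\qd(T_1,T_2)\le(\frac23+o(1))\binom n4$.

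The main obstacle is Step 2. A naive planarization loses information, because some quartets of $T_i$ are not faithfully represented by $C_i$, so the error has to be controlled exactly rather than merely bounded. I would first try uniform random child-flips and compute, per 5-leaf configuration, the probability that the planar-caterpillar quartet matches the tree quartet. I expect a symmetric answer of the form $p\cdot\mathbf 1+(1-p)/3$. If uniform flips do not give a configuration-independent $p$, I would adjust the flip distribution, for instance by making it depend on subtree sizes, or I would mix in random relabelling so that the non-faithful part becomes exactly uniform over the three topologies.
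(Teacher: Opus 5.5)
Your Steps 1 and 3 have the right architecture: uniform random orientations, a caterpillar obtained by cutting the boundary order at a common leaf $r$, and the caterpillar theorem applied to every realization. But Step 2 carries the whole proof, and you leave it as a finite check whose hoped-for outcome is false; the fallbacks you list cannot repair it.

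With uniform flips, the channel from $T|_Q$ to $C_r(T)|_Q$ is indeed symmetric, of the form $p\mathbf 1+(1-p)/3$, but $p$ depends on the configuration. If $r$ is the middle leaf of $T|_{Q\cup\{r\}}$, all eight embeddings reproduce $T|_Q$, so $p=1$ (i.e.\ $\lambda=1$). Otherwise only four of the eight do, so $p=\frac12$ (i.e.\ $\lambda=\frac14$). Hence $\mathbb E\,\sigma(C_1|_Q,C_2|_Q)=\lambda_1\lambda_2\,\sigma(T_1|_Q,T_2|_Q)$ with $\lambda_1\lambda_2\in\{1,\frac14,\frac1{16}\}$ varying with $Q$.

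No per-quartet comparison survives this. The reduction to $\frac23$ needs, in score form, $\sigma_T\ge\alpha\,\mathbb E\sigma_C+\beta$ with $\beta\ge0$. An agreeing quartet of weight $1$ then forces $\alpha\le1-\beta/2$, while a disagreeing quartet of weight $\frac1{16}$ forces $\alpha\ge16(1+\beta)$.

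Changing the flip law does not help either. Whenever $r$ is central for some quartet, that quartet is reproduced by every embedding, so a configuration-independent $p$ would have to be $1$. Then $C_r(T)$ would agree with $T$ on every quartet avoiding $r$, which is impossible unless $T|_{X\setminus\{r\}}$ is itself a caterpillar. Mixing in independent uniformly random orders only multiplies every $\lambda$ by a common factor. Nor is the root-dependence in general an $o(n^4)$ effect that averaging over $r$ absorbs.

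The missing idea is that the weights cancel not quartet by quartet but across the five quartets of a five-set $Y$, as the \emph{common} root runs over $Y$. With $Q_r=Y\setminus\{r\}$ and $\sigma_r=\sigma(T_1|_{Q_r},T_2|_{Q_r})$,
\[
\sum_{r\in Y}\lambda^r_{T_1}(Q_r)\lambda^r_{T_2}(Q_r)\,\sigma_r=\frac14\sum_{r\in Y}\sigma_r .
\]
If $T_1|_Y$ and $T_2|_Y$ have the same central leaf, the weights are $1$ at that leaf and $\frac1{16}$ at the other four, and all five $\sigma_r$ are equal. If the central leaves are distinct, $a\ne b$, the weights are $\frac14$ at $a,b$ and $\frac1{16}$ at the remaining three leaves, whose scores sum to $2-1-1=0$. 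Summing over $Y$ gives $\sum_r\sum_{Q\not\ni r}\lambda^r_{T_1}(Q)\lambda^r_{T_2}(Q)\,\sigma(T_1|_Q,T_2|_Q)=\frac{n-4}{4}\score(T_1,T_2)$. The caterpillar bound at each root then yields $\score(T_1,T_2)\ge-\frac{4n}{n-4}\bigl(E+2\binom{n-1}{3}\bigr)$.

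So rooting both trees at the same leaf (an edge will not do) and summing over all roots is the mechanism of the proof, not error control. Without this identity your Step 2 has no valid content.
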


In other words, no two sufficiently large trees can disagree on
substantially more than two thirds of their quartets, while random
relabelling shows that this proportion is asymptotically attained.

The reduction has two ingredients.  First, a uniformly random planar
embedding of a tree, cut at a chosen leaf $r$, linearly orders all the leaves
with $r$ as an endpoint and thereby produces a random caterpillar $C_r(T)$.
Second, when both trees
are rooted at the same leaf, an exact identity on each five-leaf set shows
how the expected caterpillar scores recover the original quartet score.
The caterpillar bound then gives the general result.  The structure of the
proof and its correspondence with the Lean formalization (available
at \url{https://github.com/pachterlab/P_2026_Bandelt-Dress}) are illustrated
in Figure~\ref{fig:proof-structure}.

\begin{figure}[p]
 \centering
 \includegraphics[width=\textwidth,height=.82\textheight,keepaspectratio]
 {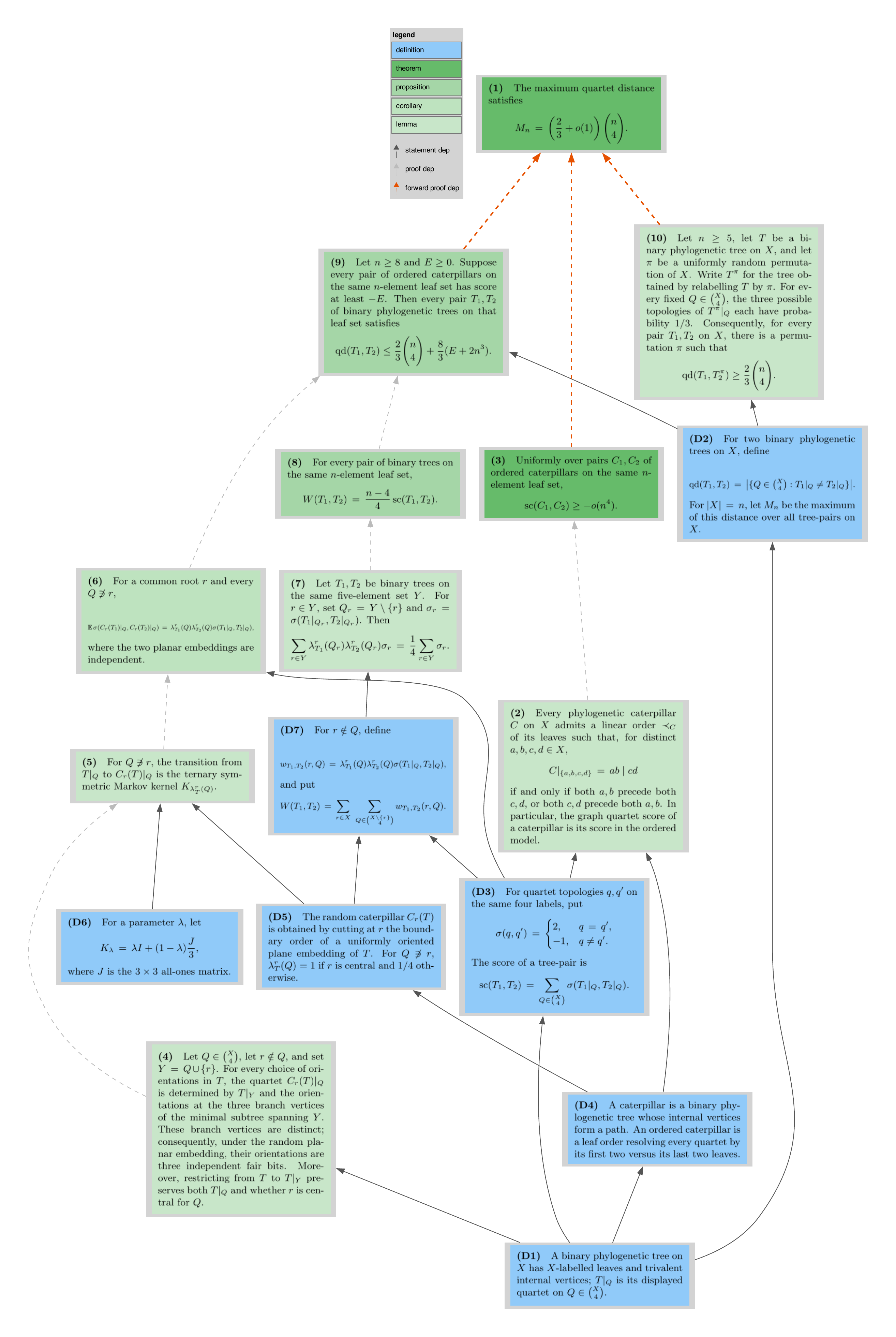}
 \caption{Proof structure and formal correspondence.  This diagram was
 generated with \texttt{span}, introduced in \cite{Pachter_2026}, which
 links labelled LaTeX statements to their corresponding Lean declarations
 and lifts Lean dependencies back to the paper.  Blue boxes are definitions
 and green boxes are results; solid arrows denote statement dependencies and
 dashed arrows denote proof dependencies.}
 \label{fig:proof-structure}
\end{figure}

\section{Quartet score and the caterpillar theorem}

\begin{definition}[quartet score]\label{def:quartet-score}
For quartet topologies $q,q'$ on the same four labels, let
\[
 \sigma(q,q')\,=\,
 \begin{cases}
  2,&q\,=\,q',\\
 -1,&q\,\ne\,q'.
 \end{cases}
\]
The score of a pair of trees is
\[
 \score(T_1,T_2)
 \,=\,\sum_{Q\in\binom X4}\sigma(T_1|_Q,T_2|_Q).
\]
\end{definition}

If $N\,=\,\binom n4$ and $D\,=\,\qd(T_1,T_2)$, then
\begin{equation}\label{eq:score-distance}
 \score(T_1,T_2)\,=\,2(N-D)-D\,=\,2N-3D.
\end{equation}
Consequently, the upper bound in Theorem~\ref{thm:main} is equivalent to
$\score(T_1,T_2)\,\geq\,-o(n^4)$ uniformly over all pairs of trees.

\begin{definition}[caterpillars]\label{def:caterpillar-model}
A binary phylogenetic tree is a \emph{caterpillar} if its internal
vertices induce a path, called its spine.  An \emph{ordered caterpillar}
on $X$ is a linear order of $X$ together with the following induced quartet
system: each quartet is resolved by putting its first two leaves against
its last two.  Its score against another ordered caterpillar is defined by
the formula in Definition~\ref{def:quartet-score}, using these induced
quartet topologies.  The following lemma connects graph-theoretic
caterpillars to this ordered model.
\end{definition}

\begin{lemma}[ordered caterpillar model]\label{lem:caterpillar-order}
Every phylogenetic caterpillar $C$ on $X$ admits a linear order $\prec_C$
of its leaves such that, for distinct $a,b,c,d\in X$,
\[
 C|_{\{a,b,c,d\}}\,=\,ab\mid cd
\]
if and only if both $a,b$ precede both $c,d$, or both $c,d$ precede both
$a,b$.  In particular, the graph quartet score of a caterpillar is its
score in the ordered model.  Conversely, if $|X|\,\geq\,4$, every linear
order on $X$ is realized in this way by a phylogenetic caterpillar.
\end{lemma}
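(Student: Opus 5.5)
We describe the caterpillar explicitly and then read off each induced quartet from the path structure. Let $C$ be a caterpillar on $X$. If $|X|\le 3$ there are no quartets, so any order works; assume $|X|\ge 4$. Write the spine as $v_1,\dots,v_m$ with $m=|X|-2$. Each internal vertex has degree three, so each interior spine vertex $v_i$ ($1<i<m$) carries exactly one pendant leaf $x_i$. Each endpoint $v_1$ and $v_m$ carries exactly two pendant leaves, say $x_0,x_1'$ at $v_1$ and $x_m',x_{m+1}$ at $v_m$. Define $\prec_C$ to list first the two leaves of $v_1$ in either order, then $x_2,\dots,x_{m-1}$ in spine order, then the two leaves of $v_m$ in either order. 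In other words, each leaf is ordered by the index of its attachment vertex, with ties occurring only at the two ends.

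For the quartet characterization, take distinct $a,b,c,d$ and list them in $\prec_C$ order as $y_1\prec y_2\prec y_3\prec y_4$. Let $p(y)$ be the spine index of the attachment vertex of $y$. Then $p(y_1)\le p(y_2)\le p(y_3)\le p(y_4)$, and $p(y_2)<p(y_3)$. This holds because equal indices occur only for the pair at $v_1$ or the pair at $v_m$, and $y_2,y_3$ cannot both be such a pair: if $y_2,y_3$ both attached to $v_1$, then $y_1$ would be a third leaf there, and symmetrically at $v_m$. Hence the path from $y_1$ to $y_2$ and the path from $y_3$ to $y_4$ are vertex-disjoint, since the first uses only spine vertices of index at most $p(y_2)$ and the second only those of index at least $p(y_3)$. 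Vertex-disjoint connecting paths are exactly the four-point characterization of the split $y_1y_2\mid y_3y_4$. So $C|_Q$ is ``first two against last two''. This is equivalent to the stated condition: $ab\mid cd$ holds if and only if $\{a,b\}$ equals $\{y_1,y_2\}$ or $\{y_3,y_4\}$, which says that both $a,b$ precede both $c,d$ or conversely. The score statement then follows immediately, since the summands of $\score$ coincide quartet by quartet with those of the ordered model.

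For the converse, given a linear order $z_1\prec\dots\prec z_n$ with $n\ge4$, build the path $v_1,\dots,v_{n-2}$. Attach $z_1,z_2$ to $v_1$, attach $z_{i+1}$ to $v_i$ for $1<i<n-2$, and attach $z_{n-1},z_n$ to $v_{n-2}$. Every internal vertex then has degree three, the internal vertices induce a path, and the ordering built in the first paragraph can be chosen to be exactly this order. The only point needing care is the end-tie argument that guarantees $p(y_2)<p(y_3)$, together with the four-point fact that disjoint paths determine the split. Both are routine, and I expect no real obstacle beyond bookkeeping the endpoints.
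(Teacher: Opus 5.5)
Your proposal is correct and follows the paper's approach. You order the leaves by their attachment position along the spine with arbitrary tie-breaking at the two ends, read off that the first two leaves of any quartet are separated from the last two, and build the converse caterpillar by attaching the extreme pairs to the path endpoints. Your end-tie argument for $p(y_2)<p(y_3)$, combined with the disjoint-paths criterion, makes explicit the existence of the ``middle spine edge'' that the paper invokes without further comment.
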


\begin{proof}
Order the leaves by the positions of their incident vertices along the
internal spine, breaking the two ties at each endpoint arbitrarily.  The
middle spine edge of any four selected leaves separates precisely the
first two from the last two.  Summing the resulting quartet scores gives
the score assertion.  Conversely, given a linear order on a set $X$ with
$|X|\,\geq\,4$, take a path of $|X|-2$ internal vertices.  Attach the first
two leaves to one endpoint, the last two leaves to the other endpoint, and
the intervening leaves, in order, to the successive interior vertices.
The resulting phylogenetic caterpillar has the prescribed quartet system.
\end{proof}

We use the following theorem.

\begin{theorem}[Alon--Naves--Sudakov \cite{Alon_2016}]
\label{thm:caterpillar}
There are nonnegative numbers $\varepsilon_n$, tending to zero as
$n\,\to\,\infty$, such that every pair $C_1,C_2$ of ordered caterpillars
on the same $n$-element leaf set satisfies
\[
 \score(C_1,C_2)\,\geq\,-\varepsilon_n n^4.
\]
\end{theorem}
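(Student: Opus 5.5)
The plan is to turn the statement into an extremal problem about permutations and their limits (permutons), and then to certify a density inequality. By Lemma~\ref{lem:caterpillar-order}, fix the order of $C_1$ and identify $X$ with $\{1,\dots,n\}$. Then $C_2$ is given by a permutation $\pi$. A $4$-set with positions $i<j<k<l$ is resolved as $ij\mid kl$ by $C_1$. Its order in $C_2$ is a pattern $\tau\in S_4$, and the two caterpillars agree exactly when $\tau$ maps $\{1,2\}$ onto $\{1,2\}$ or onto $\{3,4\}$. There are exactly $8$ such patterns out of $24$. Writing $A(\pi)$ for the density of these \emph{split-preserving} $4$-patterns in $\pi$, the score is $\binom n4\,(3A(\pi)-1)$. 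So Theorem~\ref{thm:caterpillar} is equivalent to
\[
 A(\pi)\,\geq\,\tfrac13-o(1)
\]
uniformly over permutations. By the standard compactness of permutons, and the fact that pattern densities are continuous and differ from their limits by $O(1/n)$, it suffices to prove $A(\mu)\geq\frac13$ for every permuton $\mu$. Here $A(\mu)$ is the probability that four independent $\mu$-samples, sorted by $x$-coordinate, have their bottom two $x$-points also forming the bottom or top pair in $y$-order.

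Next I will rewrite $A(\mu)$ as an integral over a pair of thresholds $(t,s)\in[0,1]^2$. Fix $(t,s)$, and let $a,b,c,d$ be the $\mu$-masses of the four quadrants cut out by the vertical line $x=t$ and the horizontal line $y=s$. Conditioning on the $x$-median gap of the four points, and on the corresponding $y$-gap, the agreement event becomes the event that the two $x$-low points lie in one $y$-half and the two $x$-high points lie in the other. This should express $A(\mu)$ as a weighted integral of quartic expressions such as $a^2d^2+b^2c^2$. The baseline $\frac13$ is attained by the uniform permuton, where $a=b=c=d$ everywhere. The goal is then to write $A(\mu)-\frac13$ as an integral of squares, or of products of nonnegative quadrant masses. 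A natural first attempt is to compare each quadrant mass with the product of its marginals, that is, to expand around $a=ts$. Then Cauchy--Schwarz in the variable $t$, applied to the first-order deviation terms, should show that they contribute nonnegatively.

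The main obstacle is exactly this certificate. The naive threshold decomposition double counts, because a quartet has many admissible thresholds. The resulting quartic form is not obviously positive, and identity and reversal-type permutons come close to extremal, which makes any slack tight. If the direct sum-of-squares approach fails, I would fall back on the method of Alon, Naves and Sudakov~\cite{Alon_2016}. That method expresses $A$ in the flag algebra of permutations on $4$ or $5$ points, over types of size $2$ or $3$. One solves the semidefinite program for a positive semidefinite certificate of $A\geq\frac13$, rounds it to exact rationals, and verifies the resulting identity among $5$-point pattern densities by finite computation. Since $\frac13$ is attained by random permutations and the bound is sharp, I expect that the certificate must be exact rather than slack, and producing it rigorously is the step I anticipate to be hardest.
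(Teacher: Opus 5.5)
Your reduction is correct, and it is exactly the paper's justification of this statement. With the order of $C_1$ fixed and $C_2$ encoded by $\pi$, agreement on a quartet means the $4$-pattern preserves the split $\{1,2\}\mid\{3,4\}$ ($8$ of the $24$ patterns). Then $\score(C_1,C_2)=(3A(\pi)-1)\binom n4$, and the theorem is equivalent to $A(\pi)\geq\frac13-o(1)$. The paper's proof consists of this translation plus a citation of Theorem~1.2 of Alon--Naves--Sudakov. If your fallback means invoking that theorem, your argument coincides with the paper's. If it means re-running the semidefinite program, you have described a computation rather than supplied one. Your direct route likewise stops exactly at the missing certificate for $A(\mu)\geq\frac13$.

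That direct route is the one the paper carries out in its Discussion (Proposition~\ref{prop:elementary-caterpillar}). The certificate is a known identity from rank statistics. Write $\mathcal G$ for your agreement event, $F$ for the distribution function of $\mu$, $\Delta(F)=\int(F(z,v)-zv)^2\,dF(z,v)$ and $R(F)=\iint(F(z,v)-zv)^2\,dz\,dv$. Then
\[
 A(\mu)-\tfrac13\,=\,\tau^*\,=\,12\Delta(F)+24R(F)\,\geq\,0.
\]
Your quadrant variables already lead toward this. With uniform marginals, $ad-bc=F(t,s)-ts$, and your threshold count gives
\[
 6\iint(ad-bc)^2\,dt\,ds\,=\,\tfrac12\,\mathbb E\bigl[g_xg_y\,(3\cdot\mathbf 1_{\mathcal G}-1)\bigr],
\]
where $g_x,g_y$ are the gaps between the second and third order statistics of the four sampled $x$- and $y$-coordinates. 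This is the Blum--Kiefer--Rosenblatt term $6R(F)$. By itself it yields only the gap-weighted inequality $\mathbb E[g_xg_y\mathbf 1_{\mathcal G}]\geq\frac13\mathbb E[g_xg_y]$, which is the double counting you noticed. Removing the weights requires the Hoeffding term $\Delta(F)$, whose thresholds are a fifth sample point. The exact combination $12\Delta+24R$ is the relation made explicit by Drton, Han, and Shi, which the paper imports rather than derives.

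The paper also avoids compactness there. It realizes $\pi$ directly as the permuton of $((L-1+U_1)/n,(\pi(L)-1+U_2)/n)$ and pays $6/n$ for collisions. This gives $\score(C_1,C_2)\geq-\frac{18}{n}\binom n4\geq-\frac34n^3$, an explicit $O(n^3)$ error rather than $o(n^4)$.

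Finally, your remark that identity and reversal permutons are nearly extremal is mistaken: both have $A=1$, which is maximal agreement. By the identity, the uniform permuton is the unique minimizer, since $R(F)=0$ forces $F(z,v)=zv$. That is why an exact sum-of-squares certificate is available.
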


\begin{proof}[Justification of the formulation]
Theorem~1.2 of Alon, Naves, and Sudakov states that the maximum quartet
distance between two $n$-leaf phylogenetic caterpillars is at most
$\left(2/3+o(1)\right)\binom n4$.  Lemma~\ref{lem:caterpillar-order}
identifies their graph-theoretic caterpillars with the ordered model, and
\eqref{eq:score-distance} converts their distance bound into the uniform
score bound stated here.
\end{proof}

\section{The common-root planar channel}

\begin{definition}[common-root planarization]\label{def:planarization}
Fix a leaf $r$ of a binary tree $T$.  At every internal vertex, choose
independently and uniformly one of its two cyclic orientations.  The
resulting plane tree has a cyclic boundary order of its leaves.  Cut this
order at $r$, retaining $r$ as the first element.  The resulting linear
order of the full leaf set $X$ defines an $|X|$-leaf caterpillar, denoted
$C_r(T)$: if four leaves occur in the order $x_1,x_2,x_3,x_4$, their
displayed quartet is $x_1x_2\mid x_3x_4$.

Let $Q$ be a quartet not containing $r$.  Call $r$ \emph{central for
$Q$ in $T$} if $r$ is the unique leaf incident with the middle internal
vertex of the five-leaf tree $T|_{Q\cup\{r\}}$.  Define
\begin{equation}\label{eq:lambda}
 \lambda_T^r(Q)\,=\,
 \begin{cases}
  1,&r\text{ is central for }Q,\\
  \frac14,&r\text{ is not central for }Q.
 \end{cases}
\end{equation}
\end{definition}

\begin{definition}[ternary symmetric Markov kernel]\label{def:symmetric-kernel}
For $-\frac12\,\leq\,\lambda\,\leq\,1$, let
\[
 K_\lambda\,=\,\lambda I+(1-\lambda)\frac{J}{3},
\]
where $J$ is the $3\times3$ all-ones matrix.  After fixing an order of the
three quartet topologies, rows index the input topology and columns index
the output topology.  Equivalently,
\[
 K_\lambda
 \,=\,
 \begin{pmatrix}
  \frac{1+2\lambda}{3} & \frac{1-\lambda}{3} & \frac{1-\lambda}{3}\\
  \frac{1-\lambda}{3} & \frac{1+2\lambda}{3} & \frac{1-\lambda}{3}\\
  \frac{1-\lambda}{3} & \frac{1-\lambda}{3} & \frac{1+2\lambda}{3}
 \end{pmatrix}.
\]
In particular,
\[
 K_1\,=\,I,
 \qquad
 K_{1/4}
 \,=\,
 \begin{pmatrix}
  \frac12 & \frac14 & \frac14\\
  \frac14 & \frac12 & \frac14\\
  \frac14 & \frac14 & \frac12
 \end{pmatrix}.
\]
\end{definition}

\begin{lemma}[five-leaf factorization]\label{lem:five-leaf-factorization}
Let $Q\in\binom X4$, let $r\notin Q$, and set $Y\,=\,Q\cup\{r\}$.
For every choice of orientations in $T$, the quartet
$C_r(T)|_Q$ is determined by $T|_Y$ and the orientations at the three
branch vertices of the minimal subtree spanning $Y$.  These branch
vertices are distinct; consequently, under the random planar embedding,
their orientations are three independent fair bits.  Moreover,
restricting from $T$ to $T|_Y$ preserves both $T|_Q$ and whether $r$ is
central for $Q$.
\end{lemma}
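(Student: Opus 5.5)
The plan is to understand the planar boundary order restricted to $Y$ directly in terms of the subtree $S$ of $T$ spanning $Y$. Three facts about $S$ do the work.

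\emph{Step 1: the restricted order depends only on $S$.} Take a plane embedding of $T$ and walk its boundary, that is, its contour. The cyclic order in which the leaves of $Y$ appear equals the boundary cyclic order of the plane tree $S$, with the embedding inherited from $T$. To see this, note that the contour walk of $T$ passes through the hanging subtrees off $S$ as excursions. Deleting these excursions leaves the contour walk of $S$. At a vertex $v$ of $S$ of degree two in $S$, the cyclic orientation at $v$ affects only where the hanging branch is inserted, not the order of the two $S$-edges; with only two edges, their cyclic order is trivial. At a vertex of degree three in $S$, the induced embedding is exactly $T$'s orientation at that vertex. Suppressing the degree-two vertices of $S$ gives $T|_Y$. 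Hence the cyclic order on $Y$ is the boundary order of the plane tree $T|_Y$, whose rotation system is read off from the orientations at the branch vertices of $S$. Cutting at $r$ is compatible with restriction, because $r\in Y$. So the linear order on $Q$, and therefore $C_r(T)|_Q$, is a function of $T|_Y$ together with those orientations.

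\emph{Step 2: there are exactly three distinct branch vertices.} A binary tree with five leaves has exactly three internal vertices, all of degree three. These are the images of the vertices of $S$ that have degree three in $S$. Distinct vertices of $T|_Y$ come from distinct vertices of $T$, so there are exactly three such vertices of $T$. Orientations are chosen independently and uniformly at distinct internal vertices of $T$, so these three orientations are independent fair bits. One point needs care here. A vertex of degree three in $S$ is internal in $T$ and has all three of its $T$-edges in $S$. Its cyclic orientation in $T$ is therefore literally a choice between the two cyclic orders of its $S$-edges, which makes the bijection between orientation bits and embeddings of $T|_Y$ exact.

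\emph{Step 3: restriction preserves both quantities.} Restriction is transitive, since both sides are obtained from the minimal spanning subtree of $Q$ inside $S$. This gives $(T|_Y)|_Q=T|_Q$. Centrality is defined purely in terms of $T|_{Q\cup\{r\}}=T|_Y$, so the same fact, applied to $T|_Y$ and its own restriction to $Y$, shows that it is unchanged.

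The main obstacle is Step 1, the contour-walk claim that deleting hanging subtrees commutes with reading off the boundary order. I would prove it by induction on the number of leaves outside $Y$. Removing one such leaf and suppressing its neighbour deletes a contiguous block from the contour sequence. It also leaves the rotation at every other vertex unchanged, because the neighbour's orientation merely places the deleted leaf between its two remaining edges. This keeps the argument local and avoids any global topological reasoning.
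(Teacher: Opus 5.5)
Your proposal is correct and follows essentially the same route as the paper. Both arguments rest on the same points: hanging subtrees off the spanning subtree occupy contiguous intervals (excursions) of the boundary order, and suppressing degree-two vertices preserves both the cyclic order on $Y$ and the orientations at the three branch vertices, which are distinct internal vertices of $T$ and so carry independent fair bits. Your leaf-by-leaf induction and your explicit transitivity-of-restriction argument in Step 3 only spell out details that the paper states in a single sentence each.
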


\begin{proof}
The minimal subtree spanning $Y$ is a tree with the five selected leaves.
Suppressing its degree-two vertices leaves exactly three trivalent branch
vertices, each inherited from a distinct internal vertex of $T$.
Each pendant subtree removed in restricting to $Y$ occupies an interval
of the boundary order, and suppressing a degree-two vertex does not change
the cyclic order of the remaining leaves.  Thus restricting the ambient
boundary order to $Y$ gives the boundary order of the reduced five-leaf
plane tree.
Suppressing the paths between its branch vertices changes neither which
leaves lie on the two sides of an edge nor the cyclic order induced at a
branch vertex.  It follows that the displayed input quartet, the central
leaf, and the output boundary-order quartet are all read from this
five-leaf tree and its three inherited orientations.  Since the ambient
orientations were chosen independently, the three inherited bits are
independent and uniform.
\end{proof}

\begin{lemma}[planar channel]\label{lem:channel}
For $Q\not\ni r$, the transition from $T|_Q$ to $C_r(T)|_Q$ is the
ternary symmetric Markov kernel $K_{\lambda_T^r(Q)}$.
\end{lemma}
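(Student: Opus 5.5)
By Lemma~\ref{lem:five-leaf-factorization}, it suffices to compute the law of $C_r(T)|_Q$ on the five-leaf tree $T|_Y$, $Y=Q\cup\{r\}$, with its three branch vertices carrying independent uniform orientations. That lemma also tells us that $T|_Q$ and the centrality of $r$ are read off from $T|_Y$. Every binary tree on five leaves has the same shape: two cherries joined through a middle vertex that carries the fifth leaf. So there are only two cases to check, namely $r$ is the middle leaf, or $r$ lies in a cherry. In each case I will enumerate the $2^3=8$ equally likely orientation patterns. For each pattern I read off the boundary order cut at $r$ and then the resulting quartet on $Q$. Because $K_\lambda$ is symmetric under permuting the three topologies, it is enough to check that the correct topology (namely $T|_Q$) appears with probability $\frac{1+2\lambda}{3}$ and each wrong one with probability $\frac{1-\lambda}{3}$.

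\emph{Central case.} Write $T|_Y$ with cherries $\{a,b\}$, $\{c,d\}$ and $r$ on the middle vertex, so $T|_Q=ab\mid cd$. In any planar embedding, each cherry occupies a contiguous interval of the cyclic order, and $r$ separates the two intervals. Cutting at $r$ therefore yields an order of the form $r$, (one cherry), (other cherry). The displayed quartet puts the first two leaves of $Q$ against the last two, so it is always $ab\mid cd$. This gives the identity kernel $K_1$, matching $\lambda=1$.

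\emph{Non-central case.} Write $T|_Y$ with cherries $\{r,x\}$ and $\{y,z\}$ and middle leaf $m$. Then $T|_Q=xm\mid yz$. Call the branch vertices $v_1$ (adjacent to $r$ and $x$), $v_2$ (carrying $m$) and $v_3$ (adjacent to $y$ and $z$). The orientation at $v_1$ decides whether $x$ comes immediately after $r$ or last. The orientation at $v_2$ decides whether $m$ precedes or follows the block $\{y,z\}$. The orientation at $v_3$ orders $y,z$ inside their block. Reading off the four orders of $Q$:
\begin{itemize}
\item $x,m,y,z$ and $y,z,m,x$ (up to the $y,z$ swap) both give $xm\mid yz$;
\item $x,y,z,m$ gives $xy\mid zm$ or $xz\mid ym$, depending on the bit at $v_3$;
\item $m,y,z,x$ gives $xz\mid my$ or $xy\mid mz$, depending on the bit at $v_3$.
\end{itemize}
Hence $xm\mid yz$ occurs with probability $\frac12$, and each of the other two topologies with probability $\frac14$. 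This is exactly the row of $K_{1/4}$, matching $\lambda=\frac14$.

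The only delicate point is the bookkeeping in the non-central case. One must confirm that the cut at $r$ really produces these four orders, that the $v_3$ bit splits the two mixed orders evenly between the two wrong topologies, and that the answer does not depend on the labelling. The last point holds because the roles of $y,z$ are symmetric, and the mixed-order bullets send one wrong topology to each side. The remaining ingredients, namely independence and reduction to $Y$, are supplied by Lemma~\ref{lem:five-leaf-factorization}.
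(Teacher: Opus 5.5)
Your proposal is correct and follows the paper's proof. You reduce to the five-leaf tree via Lemma~\ref{lem:five-leaf-factorization}, get $K_1$ in the central case because the two cherries appear as consecutive blocks after the cut at $r$, and in the non-central case enumerate the eight orientation patterns to find $T|_Q$ four times and each alternative twice, which gives $K_{1/4}$. Grouping the eight orders by which branch vertex controls which feature is just a more structured presentation of the paper's explicit list $adbc,\dots,cbda$.
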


\begin{proof}
By Lemma~\ref{lem:five-leaf-factorization}, it suffices to analyze the
eight orientation choices of a five-leaf binary tree, which consists of
a central leaf and two cherries.

If $r$ is the central leaf, cutting the boundary order at $r$ leaves the
two cherries as consecutive blocks, so all eight choices of the three
orientations display $T|_Q$.  If $r$ is not central, the eight choices
may be labelled so that the cherries are $\{r,a\}$ and $\{b,c\}$ and the
central leaf is $d$.  Starting immediately after $r$, the resulting orders
on $Q$ are
\[
 adbc,\ adcb,\ dbca,\ dcba,\ abcd,\ acbd,\ bcda,\ cbda.
\]
The first, second, seventh, and eighth orders display
$ad\mid bc\,=\,T|_Q$;
the remaining four split evenly between $ab\mid cd$ and $ac\mid bd$.
Thus the eight choices display $T|_Q$ four times and each alternative
topology twice.  Therefore the
transition probabilities are $(1,0,0)$ in the first case and
$(1/2,1/4,1/4)$ in the second, which are precisely the rows of the stated
matrices for $\lambda\,=\,1$ and $\lambda\,=\,1/4$.
\end{proof}

\begin{corollary}\label{cor:channel-score}
For a common root $r$ and every $Q\not\ni r$,
\begin{equation}\label{eq:channel-score}
 \mathbb E\,\sigma(C_r(T_1)|_Q,C_r(T_2)|_Q)
 \,=\,\lambda_{T_1}^r(Q)\lambda_{T_2}^r(Q)
  \sigma(T_1|_Q,T_2|_Q),
\end{equation}
where the two planar embeddings are independent.
\end{corollary}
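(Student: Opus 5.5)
The plan is to use the independence of the two embeddings to reduce the expectation to a bilinear form in the two output distributions, and then evaluate that form with the spectral structure of the kernels $K_\lambda$. Fix $Q\not\ni r$ and write $\lambda_i=\lambda_{T_i}^r(Q)$. Encode $\sigma$ as the $3\times 3$ matrix $S=3I-J$ indexed by quartet topologies, so that $S_{qq'}=\sigma(q,q')$. By Lemma~\ref{lem:channel}, the output $C_r(T_i)|_Q$ is distributed according to the row $e_{q_i}^{\top}K_{\lambda_i}$, where $q_i=T_i|_Q$. The two planarizations are independent, so the outputs are independent random topologies. Hence
\[
 \mathbb E\,\sigma\bigl(C_r(T_1)|_Q,C_r(T_2)|_Q\bigr)
 \,=\, e_{q_1}^{\top}K_{\lambda_1}\,S\,K_{\lambda_2}^{\top}e_{q_2}.
\]

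The key step is to evaluate $K_{\lambda_1}SK_{\lambda_2}^{\top}$, using that each $K_\lambda$ is symmetric and acts as the identity on constants. We have $JK_\lambda=K_\lambda J=J$, since every row sum of $K_\lambda$ equals $1$. Write $K_\lambda=\lambda(I-J/3)+J/3$, where $P=I-J/3$ is the projection onto the sum-zero subspace and $S=3P$. It follows that $K_\lambda P=\lambda P$, and hence
\[
 K_{\lambda_1}\,S\,K_{\lambda_2}^{\top}\,=\,3\lambda_1\lambda_2 P\,=\,\lambda_1\lambda_2 S.
\]
Taking the $(q_1,q_2)$ entry gives $\lambda_1\lambda_2\,\sigma(q_1,q_2)$, which is \eqref{eq:channel-score}.

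There is no serious obstacle here. The only point needing care is to justify that the joint law of the two outputs is the product of the two marginals. This holds because the embeddings of $T_1$ and $T_2$ are chosen independently. It also uses that Lemma~\ref{lem:channel} gives the marginal transition law given the fixed input $T_i|_Q$. A second check is that all three matrices are indexed with the same fixed order of the three topologies on $Q$, which is automatic since both trees share the leaf set.
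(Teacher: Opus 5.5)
Your proposal is correct and follows the paper's argument. Both proofs use Lemma~\ref{lem:channel} and the independence of the two embeddings to write the expectation as $e_{q_1}^{\top}K_{\lambda_1}(3I-J)K_{\lambda_2}^{\top}e_{q_2}$, and both collapse it to $\lambda_1\lambda_2\,\sigma(q_1,q_2)$ using $J(3I-J)=(3I-J)J=0$, which you phrase as $K_\lambda P=\lambda P$ for the projection $P=I-J/3$.
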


\begin{proof}
The score matrix on the three quartet states is $3I-J$.  Since
$J(3I-J)\,=\,(3I-J)J\,=\,0$, applying Lemma~\ref{lem:channel} independently in
the two trees gives the stated identity.
\end{proof}

\section{The five-leaf identity}

\begin{definition}[rooted and total weighted scores]\label{def:rooted-weight}
For $r\notin Q$, define
\[
 w_{T_1,T_2}(r,Q)\,=\,
 \lambda_{T_1}^r(Q)\lambda_{T_2}^r(Q)
 \sigma(T_1|_Q,T_2|_Q),
\]
and let
\[
 W(T_1,T_2)\,=\,
 \sum_{r\in X}\ \sum_{Q\in\binom{X\setminus\{r\}}4}
 w_{T_1,T_2}(r,Q).
\]
\end{definition}

\begin{lemma}\label{lem:five-leaf}
Let $T_1,T_2$ be binary trees on the same five-element set $Y$.  For
$r\in Y$, set $Q_r\,=\,Y\setminus\{r\}$ and
$\sigma_r\,=\,\sigma(T_1|_{Q_r},T_2|_{Q_r})$.  Then
\begin{equation}\label{eq:five-leaf}
 \sum_{r\in Y}
 \lambda_{T_1}^r(Q_r)\lambda_{T_2}^r(Q_r)\sigma_r
 \,=\,\frac14\sum_{r\in Y}\sigma_r.
\end{equation}
\end{lemma}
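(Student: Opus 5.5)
The plan is a direct finite case analysis built on the structure of five-leaf binary trees. Every binary tree on the five-element set $Y$ consists of a central leaf attached to the middle internal vertex together with two cherries. So each $T_i$ has a unique leaf $c_i$ that is central for $Q_{c_i}$, and $\lambda_{T_i}^r(Q_r)=1$ exactly when $r=c_i$; otherwise it equals $\frac14$. The coefficient $\lambda_{T_1}^r\lambda_{T_2}^r$ is therefore $1$, $\frac14$ or $\frac1{16}$, according to whether $r$ is central in both trees, in exactly one, or in neither. I will split into the cases $c_1=c_2$ and $c_1\neq c_2$, and in each case reduce \eqref{eq:five-leaf} to a linear relation among the $\sigma_r$ that can be checked by hand. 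Recall that each $\sigma_r\in\{2,-1\}$.

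\emph{Case $c_1=c_2=c$.} Moving everything to one side, \eqref{eq:five-leaf} becomes $\sigma_c+\frac1{16}\sum_{r\ne c}\sigma_r=\frac14\sigma_c+\frac14\sum_{r\ne c}\sigma_r$, which is equivalent to $4\sigma_c=\sum_{r\neq c}\sigma_r$. Deleting $c$ from $T_i$ displays the two cherries of $T_i$ against each other.
\begin{itemize}
\item If the two trees have the same cherries, then $T_1=T_2$. All $\sigma_r$ equal $2$, and both sides are $8$.
\item Otherwise the cherries give two different pairings of $Y\setminus\{c\}$, so $\sigma_c=-1$, and we must show that all four other quartets disagree. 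Up to symmetry, take $T_1$ with cherries $\{a,b\},\{d,e\}$ and $T_2$ with cherries $\{a,d\},\{b,e\}$. Deleting $a$ gives $bc\mid de$ in $T_1$ and $cd\mid be$ in $T_2$, which differ. The relabelling symmetry of this configuration handles the remaining three deletions.
\end{itemize}

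\emph{Case $c_1\neq c_2$.} Here \eqref{eq:five-leaf} reduces to $\frac1{16}\sum_{r\notin\{c_1,c_2\}}\sigma_r=\frac14\sum_{r\notin\{c_1,c_2\}}\sigma_r$. So the three values $\sigma_r$ with $r\notin\{c_1,c_2\}$ must sum to $0$, which means exactly one of those three quartets agrees. Normalize $T_1$ to have cherries $\{a,b\}$ and $\{d,e\}$ with central leaf $c$, and by symmetry take $c_2=a$. Then $T_2$ has central leaf $a$, and its cherries are one of the three pairings of $\{b,c,d,e\}$. Swapping $d\leftrightarrow e$ identifies the pairings $\{b,d\}\{c,e\}$ and $\{b,e\}\{c,d\}$, so only two subcases remain; in each I compute the restrictions to $Y\setminus\{r\}$ for $r\in\{b,d,e\}$.
\begin{itemize}
\item For $T_2$ with cherries $\{b,c\},\{d,e\}$: the quartet with $r=b$ agrees ($ac\mid de$ in both trees), while $r=d$ and $r=e$ disagree.
\item For $T_2$ with cherries $\{b,d\},\{c,e\}$: only $r=d$ agrees ($ab\mid ce$ in both trees).
\end{itemize}
In both subcases the three values sum to $2-1-1=0$, as required.

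The main obstacle is not conceptual but bookkeeping: making sure the symmetry reductions really cover every configuration. I will justify them by the action of relabellings of $Y$ that fix $T_1$: swapping within a cherry, and swapping the two cherries. As a fallback, the identity involves only finitely many pairs of trees (there are $15$ binary trees on five leaves), so it can be confirmed by exhaustive enumeration, which is presumably how the Lean formalization handles it.
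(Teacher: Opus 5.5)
Your proof is correct and follows essentially the same route as the paper. You split on whether the central leaves coincide: when they coincide, all five $\sigma_r$ are equal; when they differ, the weights at the two central leaves cancel, and it suffices that exactly one of the other three quartets agrees, which you check by hand. The only difference is bookkeeping: the paper lists all three pairings of $T_2$ directly, while you reduce to two pairings using the $d\leftrightarrow e$ symmetry.
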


\begin{proof}
Let $a$ and $b$ be the central leaves of $T_1$ and $T_2$.  If $a\,=\,b$,
then either the two cherry matchings agree, in which case every
$\sigma_r\,=\,2$, or they differ, in which case every $\sigma_r\,=\,-1$.  Calling
this common value $t$, the left side of \eqref{eq:five-leaf} is
\[
 \left(1+4\cdot\frac1{16}\right)t\,=\,\frac54t,
\]
which equals one quarter of the sum of the five scores.

Suppose $a\,\ne\,b$, and call the other leaves $c,d,e$.  Relabel them so
that the cherries of $T_1$ are $\{b,c\}$ and $\{d,e\}$.  The three
possible cherry matchings in $T_2$ are
\[
 \{a,c\},\{d,e\},\qquad
 \{a,d\},\{c,e\},\qquad
 \{a,e\},\{c,d\}.
\]
For deletions $r\in\{c,d,e\}$, respectively, each matching gives exactly
one agreement and two disagreements.  Hence
$\sigma_c+\sigma_d+\sigma_e\,=\,2-1-1\,=\,0$.  The channel weight is $1/4$ at
$a$ and $b$, and $1/16$ at $c,d,e$.  The left side of
\eqref{eq:five-leaf} minus its right side is therefore
\[
 \left(\frac1{16}-\frac14\right)
 (\sigma_c+\sigma_d+\sigma_e)\,=\,0.
\]
\end{proof}

Figure~\ref{fig:five-leaf-channel} summarizes the two local channel cases
and the cancellation that results when the same root is used in both trees.

\begin{figure}[t]
 \centering
 \begin{tikzpicture}[
   x=1cm,
   y=1cm,
   internal/.style={circle,fill=black,inner sep=1.6pt},
   leaf/.style={circle,draw=black,fill=white,minimum size=5.5mm,
     inner sep=0pt,font=\small},
   root/.style={leaf,very thick},
   panel/.style={draw=black!45,rounded corners=2pt},
   note/.style={font=\small,align=center}
 ]
  \draw[panel] (-7.8,-2.55) rectangle (-0.2,2.45);
  \node[font=\small\bfseries] at (-4,2.15)
    {(a) The root $r$ is central};

  \node[internal] (cl) at (-6,0.75) {};
  \node[internal] (cm) at (-4,0.75) {};
  \node[internal] (cr) at (-2,0.75) {};
  \node[leaf] (ca) at (-7,1.45) {$a$};
  \node[leaf] (cb) at (-7,0.05) {$b$};
  \node[root] (croot) at (-4,1.65) {$r$};
  \node[leaf] (cc) at (-1,1.45) {$c$};
  \node[leaf] (cd) at (-1,0.05) {$d$};
  \draw (cl)--(cm)--(cr);
  \draw (cl)--(ca) (cl)--(cb) (cm)--(croot) (cr)--(cc) (cr)--(cd);

  \node[note] at (-4,-0.35)
    {$Q\,=\,\{a,b,c,d\}$,\quad $T|_Q\,=\,ab\mid cd$};
  \node[note] at (-4,-0.95)
    {all eight embeddings give $ab\mid cd$};
  \node[note] at (-4,-1.50)
    {output probabilities $(1,0,0)$};
  \node[note] at (-4,-2.05)
    {$\lambda_T^r(Q)\,=\,1$};

  \draw[panel] (0.2,-2.55) rectangle (7.8,2.45);
  \node[font=\small\bfseries] at (4,2.15)
    {(b) The root $r$ is not central};

  \node[internal] (nl) at (2,0.75) {};
  \node[internal] (nm) at (4,0.75) {};
  \node[internal] (nr) at (6,0.75) {};
  \node[root] (nroot) at (1,1.45) {$r$};
  \node[leaf] (na) at (1,0.05) {$a$};
  \node[leaf] (nd) at (4,1.65) {$d$};
  \node[leaf] (nb) at (7,1.45) {$b$};
  \node[leaf] (nc) at (7,0.05) {$c$};
  \draw (nl)--(nm)--(nr);
  \draw (nl)--(nroot) (nl)--(na) (nm)--(nd) (nr)--(nb) (nr)--(nc);

  \node[note] at (4,-0.35)
    {$Q\,=\,\{a,b,c,d\}$,\quad $T|_Q\,=\,ad\mid bc$};
  \node[note] at (4,-1.05)
    {the eight embeddings give\\[-1pt]
     $4(ad\mid bc),\ 2(ab\mid cd),\ 2(ac\mid bd)$};
  \node[note] at (4,-1.65)
    {output probabilities $(1/2,1/4,1/4)$};
  \node[note] at (4,-2.15)
    {$\lambda_T^r(Q)\,=\,1/4$};

  \node[draw=black!60,rounded corners=2pt,fill=black!3,
    minimum width=15.6cm,minimum height=1.55cm,align=center,font=\small]
    at (0,-3.65) {\textbf{Common-root five-leaf identity}\\[5pt]
    $\displaystyle
     \sum_{r\in Y}
     \lambda_{T_1}^r(Q_r)\lambda_{T_2}^r(Q_r)\sigma_r
     \,=\,\frac14\sum_{r\in Y}\sigma_r,
     \qquad Q_r\,=\,Y\setminus\{r\}.$};
 \end{tikzpicture}
 \caption{The five-leaf planar channel and its cancellation.  A binary
 five-leaf tree has three internal vertices, whose independent orientations
 give eight plane embeddings.  If the common root $r$ is central, all eight
 embeddings preserve the input quartet.  If it is not central, the input
 quartet occurs four times and each alternative occurs twice.  The boxed
 identity is Lemma~\ref{lem:five-leaf}; using the same root in both trees is
 what makes the channel weights cancel after summation.}
 \label{fig:five-leaf-channel}
\end{figure}

\begin{proposition}[unweighting identity]\label{prop:unweighting}
For every pair of binary trees on the same $n$-element leaf set,
\begin{equation}\label{eq:unweighting}
 W(T_1,T_2)\,=\,\frac{n-4}{4}\score(T_1,T_2).
\end{equation}
\end{proposition}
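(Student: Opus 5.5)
The plan is a double-counting argument: regroup the sum defining $W(T_1,T_2)$ by five-element subsets, apply Lemma~\ref{lem:five-leaf} on each one, and then count how often each quartet appears.

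Each pair $(r,Q)$ with $r\notin Q$ corresponds bijectively to a pair $(Y,r)$ with $Y\in\binom X5$ and $r\in Y$, via $Y=Q\cup\{r\}$ and $Q=Q_r=Y\setminus\{r\}$. Hence
\[
 W(T_1,T_2)\,=\,\sum_{Y\in\binom X5}\ \sum_{r\in Y} w_{T_1,T_2}(r,Y\setminus\{r\}).
\]

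The key step is to rewrite each inner sum in terms of the restricted trees $T_1|_Y$ and $T_2|_Y$. By the last assertion of Lemma~\ref{lem:five-leaf-factorization}, restricting to $Y$ preserves both $T_i|_{Q_r}$ and whether $r$ is central for $Q_r$. Therefore
\[
 \lambda_{T_i}^r(Q_r)\,=\,\lambda_{T_i|_Y}^r(Q_r)
 \qquad\text{and}\qquad
 \sigma(T_1|_{Q_r},T_2|_{Q_r})\,=\,\sigma(T_1|_Y|_{Q_r},T_2|_Y|_{Q_r}).
\]
Thus the inner sum is exactly the left side of \eqref{eq:five-leaf} for the binary five-leaf trees $T_1|_Y$ and $T_2|_Y$. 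Lemma~\ref{lem:five-leaf} then equates it to $\frac14\sum_{r\in Y}\sigma(T_1|_{Q_r},T_2|_{Q_r})$.

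Summing over $Y$ gives
\[
 W\,=\,\frac14\sum_{Y}\sum_{Q\subset Y,\,|Q|=4}\sigma(T_1|_Q,T_2|_Q).
\]
Each quartet $Q$ lies in exactly $n-4$ five-element sets $Y=Q\cup\{r\}$, one for each $r\in X\setminus Q$. So the double sum equals $(n-4)\score(T_1,T_2)$, which yields \eqref{eq:unweighting}.

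The only point needing care is this restriction step. We must confirm that the restriction of a binary tree to five leaves is again a binary tree on $Y$, so that Lemma~\ref{lem:five-leaf} applies. We must also confirm that centrality of $r$ for $Q_r$ in $T$ coincides with centrality in $T|_Y$. The latter is immediate, since centrality was defined via $T|_{Q\cup\{r\}}=T|_Y$ itself. When $n<5$ both sides vanish: $W$ is an empty sum, and the factor $n-4$ is zero for $n=4$ while $\score$ is an empty sum for $n<4$.
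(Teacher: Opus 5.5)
Your proof is correct and follows essentially the same route as the paper: regroup $W(T_1,T_2)$ by five-element sets $Y=Q\cup\{r\}$, note that centrality and the induced quartets are determined by $T_i|_Y$ so that Lemma~\ref{lem:five-leaf} applies, and use that each quartet lies in exactly $n-4$ such sets. Your checks on the restriction step and on the degenerate cases $n<5$ are extra care that the paper leaves implicit.
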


\begin{proof}
Apply Lemma~\ref{lem:five-leaf} to $T_1|_Y,T_2|_Y$ and sum over
$Y\in\binom X5$.  Centrality is determined by the restriction to
$Q\cup\{r\}\,=\,Y$.  Consequently, the left sides sum to $W(T_1,T_2)$:
each pair $(r,Q)$ with $r\notin Q$ occurs once, for
$Y\,=\,Q\cup\{r\}$.  On the right, each quartet occurs for all $n-4$ choices
of the fifth leaf.  This proves \eqref{eq:unweighting}.
\end{proof}

\section{Proof of the main theorem}

\begin{proposition}[quantitative common-root reduction]
\label{prop:common-root-reduction}
Let $n\,\geq\,8$ and $E\,\geq\,0$.  Suppose every pair of ordered caterpillars on the
same $n$-element leaf set has score at least $-E$.  Then every pair
$T_1,T_2$ of binary phylogenetic trees on that leaf set satisfies
\[
 \qd(T_1,T_2)
 \,\leq\,\frac23\binom n4+
 \frac{4n}{3(n-4)}\left(E+2\binom{n-1}{3}\right)
 \,\leq\,\frac23\binom n4+\frac83(E+2n^3).
\]
\end{proposition}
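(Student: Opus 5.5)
Fix a leaf $r\in X$ and draw two independent common-root planarizations, producing ordered caterpillars $C_r(T_1)$ and $C_r(T_2)$ on the full leaf set $X$. By hypothesis, every realization satisfies $\score(C_r(T_1),C_r(T_2))\geq -E$, so the same bound holds for the expectation. We split this expected score into two parts. The first part collects the quartets $Q\not\ni r$; by Corollary~\ref{cor:channel-score} its expectation is exactly $\sum_{Q\not\ni r}w_{T_1,T_2}(r,Q)$. The second part collects the $\binom{n-1}{3}$ quartets containing $r$. Each of these contributes $\sigma\leq 2$, so this part is at most $2\binom{n-1}{3}$. Combining the two parts gives, for every $r$,
\[
 \sum_{Q\in\binom{X\setminus\{r\}}4}w_{T_1,T_2}(r,Q)\,\geq\,-E-2\binom{n-1}{3}.
\]

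Next we sum this inequality over all $n$ choices of $r$. The left side becomes $W(T_1,T_2)$, and Proposition~\ref{prop:unweighting} identifies it with $\frac{n-4}{4}\score(T_1,T_2)$. Hence
\[
\score(T_1,T_2)\geq -\frac{4n}{n-4}\Bigl(E+2\tbinom{n-1}{3}\Bigr).
\]
Writing $\score=2\binom n4-3\qd$ by \eqref{eq:score-distance} and solving for $\qd$ yields the first inequality, with constant $\frac{4n}{3(n-4)}$.

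For the second inequality, note that $n\geq 8$ gives $\frac{n}{n-4}\leq 2$, so $\frac{4n}{3(n-4)}\leq\frac83$. Also $2\binom{n-1}{3}\leq\frac{(n-1)^3}{3}\leq 2n^3$.

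The only point that needs care is the expectation step. Corollary~\ref{cor:channel-score} applies only to quartets avoiding $r$, so the quartets through $r$ must be handled separately. They cannot simply be dropped, because they are not controlled by the channel. Bounding each of their contributions above by $2$ is the right direction, since we need a lower bound on the part avoiding $r$. We must also confirm that $C_r(T_i)$ are genuine ordered caterpillars on all of $X$, so that the hypothesis applies to every realization. This holds by Definition~\ref{def:planarization}.
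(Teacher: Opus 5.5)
Your proposal is correct and matches the paper's proof essentially step for step. Both arguments fix a common root $r$ and take expectations of the caterpillar score bound. Both discard the $\binom{n-1}{3}$ quartets through $r$ using $|\sigma|\le 2$, apply Corollary~\ref{cor:channel-score} to the remaining quartets, and sum over $r$ via Proposition~\ref{prop:unweighting}. Both then convert to $\qd$ with \eqref{eq:score-distance} and use $n/(n-4)\le 2$ for $n\ge 8$ to get the final bound.
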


\begin{proof}
Fix $r\in X$.  Every realization of $C_r(T_1),C_r(T_2)$ is a pair of
ordered caterpillars, so its score is at least $-E$.  Only
$\binom{n-1}{3}$ quartets contain $r$, and every quartet score has
absolute value at most $2$.  Deleting those terms and using
Corollary~\ref{cor:channel-score} gives
\begin{equation}\label{eq:fixed-root}
 \sum_{Q\in\binom{X\setminus\{r\}}4}
 \lambda_{T_1}^r(Q)\lambda_{T_2}^r(Q)
 \sigma(T_1|_Q,T_2|_Q)
 \,\geq\,-E-2\binom{n-1}{3}.
\end{equation}
Summing over the $n$ choices of $r$ and applying
Proposition~\ref{prop:unweighting} gives
\[
 \score(T_1,T_2)\,=\,\frac4{n-4}W(T_1,T_2)
 \,\geq\,-\frac{4n}{n-4}
   \left(E+2\binom{n-1}{3}\right),
\]
so the factor $(n-4)/4$ in the unweighting identity has cancelled the
extra factor of $n$ from summing over the roots.  Using
\eqref{eq:score-distance},
\[
 \qd(T_1,T_2)
 \,\leq\,\frac23\binom n4+
   \frac{4n}{3(n-4)}\left(E+2\binom{n-1}{3}\right).
\]
Finally, $4n/(n-4)\,\leq\,8$ for $n\,\geq\,8$ and
$\binom{n-1}{3}\,\leq\,n^3$, which gives the stated bound.
\end{proof}

\begin{lemma}[uniform random relabelling]\label{lem:random-relabeling}
Let $n\,\geq\,4$, let $T$ be a binary phylogenetic tree on $X$, and let
$\pi$ be a uniformly random permutation of $X$.  Write $T^\pi$ for the
tree obtained by relabelling $T$ by $\pi$.  For every fixed
$Q\in\binom X4$, the three possible topologies of $T^\pi|_Q$ each have
probability $1/3$.  Consequently, for every pair $T_1,T_2$ on $X$,
there is a permutation $\pi$ such that
\[
 \qd(T_1,T_2^\pi)\,\geq\,\frac23\binom n4.
\]
\end{lemma}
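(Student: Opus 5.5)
The plan has two steps: prove the uniformity claim through a symmetry argument on permutations, then obtain the existence statement by averaging.

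For the first claim, fix $Q=\{a,b,c,d\}$. Relabelling by $\pi$ means that leaf $x$ of $T$ receives the label $\pi(x)$. Hence $T^\pi|_Q$ is the image under $\pi$ of the quartet $T|_{\pi^{-1}(Q)}$. The three topologies on $Q$ are $ab\mid cd$, $ac\mid bd$ and $ad\mid bc$. Let $\tau$ be the transposition $(b\,c)$ of $X$. It fixes $Q$ setwise and sends $ab\mid cd$ to $ac\mid bd$, sends $ac\mid bd$ to $ab\mid cd$, and fixes $ad\mid bc$. Thus $\tau$ swaps the first two topologies. Similarly, $(b\,d)$ swaps $ab\mid cd$ and $ad\mid bc$. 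Since $\pi\mapsto\tau\circ\pi$ is a bijection of the symmetric group preserving the uniform measure, and $T^{\tau\circ\pi}|_Q=\tau\bigl(T^{\pi}|_Q\bigr)$, the law of $T^\pi|_Q$ is invariant under these transpositions. Together they generate the full symmetric group on the three topologies, so the distribution is uniform and each topology has probability $1/3$.

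The only point needing care is the identity $T^{\tau\circ\pi}|_Q=\tau(T^\pi|_Q)$, i.e.\ that relabelling commutes with restriction. This holds because relabelling is an isomorphism of the underlying tree, and restriction and suppression are defined purely graph-theoretically. This is the main (mild) obstacle: fixing conventions so that composition goes the right way. If the convention were $\pi\circ\tau$ instead, I would use the invariance of the uniform measure under right multiplication and the same argument.

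For the consequence, fix $T_1,T_2$ and take $\pi$ uniform. For each $Q$, the topology $T_1|_Q$ is fixed and $T_2^\pi|_Q$ is uniform on three topologies, so $\Pr[T_1|_Q\ne T_2^\pi|_Q]=2/3$. By linearity of expectation, $\mathbb E\,\qd(T_1,T_2^\pi)=\frac23\binom n4$. Since a random variable attains a value at least its mean with positive probability, some $\pi$ satisfies $\qd(T_1,T_2^\pi)\ge\frac23\binom n4$. Together with Proposition~\ref{prop:common-root-reduction} and Theorem~\ref{thm:caterpillar}, this yields both directions of Theorem~\ref{thm:main}.
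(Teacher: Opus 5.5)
Your proof is correct, but it reaches the uniformity claim by a different mechanism than the paper.

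\textbf{The paper's route.} It conditions on the preimage $S=\pi^{-1}(Q)$ and notes that $\pi|_S$ is then a uniform bijection $S\to Q$. It then counts directly: exactly $8$ of the $24$ bijections carry the split of $T|_S$ to each of the three topologies on $Q$. There are two ways to exchange the sides and two ways to order the labels within each side.

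\textbf{Your route.} You never condition or count. You show that the law of $T^\pi|_Q$ is invariant under $\pi\mapsto\tau\circ\pi$ for the transpositions $(b\,c)$ and $(b\,d)$. These fix $Q$ setwise and act transitively on the three topologies, so invariance forces uniformity. The commutation identity $T^{\tau\circ\pi}|_Q=\tau(T^\pi|_Q)$ is exactly the point that needs checking, and it holds because $\tau^{-1}(Q)=Q$.

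\textbf{How they relate.} The two arguments are the two halves of orbit--stabilizer for $S_4$ acting on the three splits of a four-set. The paper's ``$8$ out of $24$'' is the stabilizer count, and your transpositions exhibit transitivity.

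\textbf{What each buys.} The paper's count is fully explicit and gives uniformity conditional on $S$ directly. Yours is more conceptual and applies verbatim to any random labelled tree whose law is invariant under permutations of the labels. It also yields the conditional statement, since left multiplication by $\tau$ preserves $\pi^{-1}(Q)$.

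\textbf{The averaging step.} Your derivation of the existence of $\pi$ (per-quartet disagreement probability $2/3$, linearity of expectation, some outcome at least the mean) is identical to the paper's.
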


\begin{proof}
Condition on the four-element set $S\,=\,\pi^{-1}(Q)$.  The quartet
$T|_S$ has a fixed split, while the restriction $\pi|_S$ is uniformly
distributed over the $4!$ bijections from $S$ to $Q$.  For each of the
three quartet topologies on $Q$, exactly eight of these bijections map the
two sides of the split of $T|_S$ to that topology: there are two ways to
exchange the sides and two ways to order the labels within each side.
Thus each topology has conditional, and hence unconditional, probability
$8/24\,=\,1/3$.

For fixed $T_1,T_2$, the expected disagreement on each quartet is
therefore $2/3$.  Linearity of expectation gives expected total distance
$(2/3)\binom n4$, and some permutation attains at least its expectation.
\end{proof}

\begin{proof}[Proof of Theorem~\ref{thm:main}]
Let $E_n\,=\,\varepsilon_n n^4$, where $\varepsilon_n\,\to\,0$ is supplied by
Theorem~\ref{thm:caterpillar}.  Then $E_n\,=\,o(n^4)$ and every pair of
$n$-leaf ordered caterpillars has score at least $-E_n$.  Applying
Proposition~\ref{prop:common-root-reduction} gives, uniformly over all
pairs of trees,
\[
 \qd(T_1,T_2)
 \,\leq\,\frac23\binom n4+\frac83(E_n+2n^3)
 \,=\,\left(\frac23+o(1)\right)\binom n4.
\]
Taking the maximum proves the upper bound in Theorem~\ref{thm:main}.

For the reverse inequality, apply
Lemma~\ref{lem:random-relabeling} to any pair of tree shapes.  Some
relabelling has distance at least $(2/3)\binom n4$.  This proves the
theorem.
\end{proof}

\section{Discussion}

Although the proof of Theorem~\ref{thm:main} as presented above invokes the
Alon--Naves--Sudakov caterpillar theorem and therefore ultimately rests on
flag algebras, the caterpillar result has an alternative proof from the
statistical literature on rank-based measures of association and consistent
tests of independence.  In that language, agreement of two caterpillars is
the four-point concordance event underlying the Bergsma--Dassios sign
covariance $\tau^*$ \cite{Bergsma_2014}.  After a discretization step, the
required lower bound follows from the nonnegativity of this coefficient.

This statistical argument brings together three measures of dependence.
Hoeffding introduced an integrated squared-discrepancy measure in his
nonparametric test of independence \cite{Hoeffding_1948}, and Blum, Kiefer,
and Rosenblatt introduced a closely related measure with respect to the
product of the marginals \cite{Blum_1961}.  Yanagimoto connected these
measures to probabilities of four-point rank patterns
\cite{Yanagimoto_1970}.  Bergsma and Dassios later introduced $\tau^*$
directly through concordant and discordant quadruples \cite{Bergsma_2014},
and Drton, Han, and Shi made explicit the resulting relation among the three
quantities \cite{Drton_2020}; see also the comparison of rank correlations by
Shi, Drton, and Han \cite{Shi_2022}.  The relation identifies the excess
four-point concordance probability above $1/3$ with $\tau^*$, and expresses
$\tau^*$ as a nonnegative linear combination of the two squared-discrepancy
measures.

For the precise formulation, let $(Z,V)$ be a random
point in $[0,1]^2$ whose two marginals are uniform, and write
\[
 F(z,v)\,=\,\mathbb P(Z\,\leq\,z,\ V\,\leq\,v).
\]
Let $(Z_i,V_i)$, $i\,=\,1,\ldots,4$, be independent copies of $(Z,V)$.  The continuous
marginals ensure that ties occur with probability zero.  For real numbers
$z_1,z_2,z_3,z_4$, let
\[
 a(z_1,z_2,z_3,z_4)
 \,=\,\operatorname{sgn}\bigl(
 |z_1-z_2|+|z_3-z_4|-|z_1-z_3|-|z_2-z_4|
 \bigr),
\]
where $\operatorname{sgn}(0)\,=\,0$, and define
\[
 \tau^*\,=\,
 \mathbb E\bigl[a(Z_1,Z_2,Z_3,Z_4)a(V_1,V_2,V_3,V_4)\bigr].
\]
This is the unnormalized convention, whose maximum is $2/3$; multiplying it
by $3/2$ gives the unit-maximum normalization used by some authors.
Let $\mathcal G$ be the event that the partition into the two smaller and
two larger $Z$-coordinates is also the partition into the two smaller and
two larger $V$-coordinates.  Writing $ij\mid k\ell$ for the corresponding
partition of the four indices, $a$ takes the values $-1$, $1$, and $0$ on
$12\mid34$, $13\mid24$, and $14\mid23$, respectively.  If
$p\,=\,\mathbb P(\mathcal G)$, symmetry under permutations of the four sample
indices assigns probability $p/3$ to each of the three equal pairs of
partitions and probability $(1-p)/6$ to each of the six unequal ordered
pairs.  Consequently,
\[
 \tau^*\,=\,\frac{2p}{3}-\frac{1-p}{3}
 \,=\,p-\frac13.
\]

Define the unnormalized squared-discrepancy functionals
\[
 \Delta(F)
 \,=\,\int_{[0,1]^2}\bigl(F(z,v)-zv\bigr)^2\,dF(z,v)
\]
and
\[
 R(F)
 \,=\,\int_0^1\int_0^1
       \bigl(F(z,v)-zv\bigr)^2\,dz\,dv.
\]
Here $\Delta(F)$ is the unnormalized integral underlying Hoeffding's measure
of dependence \cite{Hoeffding_1948}; Hoeffding's $D$ is commonly normalized
as $30\Delta(F)$.  The functional $R(F)$ is the corresponding
squared-discrepancy integral of Blum, Kiefer, and Rosenblatt
\cite{Blum_1961}.  The identity discussed above now gives
\begin{equation}\label{eq:four-point-square-identity}
 \mathbb P(\mathcal G)-\frac13
 \,=\,\tau^*\,=\,12\Delta(F)+24R(F)\,\geq\,0.
\end{equation}
In particular,
\begin{equation}\label{eq:four-point-one-third}
 \mathbb P(\mathcal G)\,\geq\,\frac13.
\end{equation}

\begin{proposition}[elementary caterpillar bound]
\label{prop:elementary-caterpillar}
Every pair $C_1,C_2$ of ordered caterpillars on the same $n$-element leaf set,
with $n\,\geq\,4$, satisfies
\[
 \qd(C_1,C_2)
 \,\leq\,\left(\frac23+\frac6n\right)\binom n4
\]
and
\[
 \score(C_1,C_2)
 \,\geq\,-\frac{18}{n}\binom n4
 \,=\,-\frac34(n-1)(n-2)(n-3)
 \,\geq\,-\frac34n^3.
\]
The score bound is equivalent to the distance bound and is used in
Proposition~\ref{prop:common-root-reduction}.
\end{proposition}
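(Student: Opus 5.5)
Two ordered caterpillars on $X$ amount to a permutation: identify $X$ with $\{1,\dots,n\}$ via the order of $C_1$, and let $\pi(i)$ be the position of $i$ in $C_2$. A quartet $i<j<k<\ell$ is displayed as $ij\mid k\ell$ by $C_1$. It is displayed identically by $C_2$ exactly when $\{\pi(i),\pi(j)\}$ is either the pair of two smaller or the pair of two larger values among $\pi(i),\pi(j),\pi(k),\pi(\ell)$. This is precisely the event $\mathcal G$ for the four points $(i,\pi(i)),\dots,(\ell,\pi(\ell))$.

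The plan is to turn the permutation into a distribution with uniform marginals. Let $(Z,V)$ be a uniform random point of $\bigcup_i [\frac{i-1}{n},\frac in)\times[\frac{\pi(i)-1}{n},\frac{\pi(i)}{n})$, the standard ``checkerboard copula'' of $\pi$. Its marginals are uniform and continuous, so \eqref{eq:four-point-one-third} gives $\mathbb P(\mathcal G)\geq 1/3$ for four independent copies.

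Next, condition on which blocks the four samples fall into. Each sample picks a block index uniformly from $\{1,\dots,n\}$, independently.

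\begin{itemize}
\item If the four indices are distinct, then the $Z$-order and the $V$-order are the orders of the indices and of their $\pi$-values. So $\mathcal G$ occurs iff the corresponding quartet agrees in $C_1$ and $C_2$. Every $4$-set of indices is equally likely here, so conditionally $\mathbb P(\mathcal G)=A/\binom n4$, where $A=\binom n4-\qd(C_1,C_2)$.
\item If two indices coincide, I bound $\mathbb P(\mathcal G)$ above by $1$. The probability of this case is $1-\frac{n(n-1)(n-2)(n-3)}{n^4}\leq \frac6n$, since at most $\binom42=6$ pairs can collide, each with probability $1/n$.
\end{itemize}

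Combining the two cases,
\[
 \frac13\leq \mathbb P(\mathcal G)\leq \frac{A}{\binom n4}\cdot\frac{(n)_4}{n^4}+\Bigl(1-\frac{(n)_4}{n^4}\Bigr).
\]
This gives $A/\binom n4\geq \frac13-\frac6n$ in a slightly weaker form; I would then tidy the algebra to reach exactly $\qd\leq(\frac23+\frac6n)\binom n4$. Concretely, write $\rho=(n)_4/n^4$, so that $A/\binom n4\geq 1-\frac{2}{3\rho}$, and check that $\frac{2}{3\rho}\leq\frac23+\frac6n$ for $n\geq4$. This is a polynomial inequality in $n$; I would verify it directly, and if it fails for small $n$, note that the bound is trivial whenever $\frac6n\geq\frac13$, i.e.\ for $n\leq18$.

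The score bound then follows from \eqref{eq:score-distance}: $\score=2N-3D\geq 2N-3(\frac23+\frac6n)N=-\frac{18}{n}N$. Expanding $N=\binom n4$ gives $-\frac34(n-1)(n-2)(n-3)\geq-\frac34n^3$.

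The main obstacle is the conditioning step when indices coincide. Two samples in the same block are tied in neither coordinate, but their relative orders within the block are coupled, so I simply bound that case crudely by $1$. The remaining delicate point is the final inequality $\frac{2}{3\rho}\leq\frac23+\frac6n$ with the exact constant $6$.
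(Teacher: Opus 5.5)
Your proof is correct and is the paper's argument: the same checkerboard copula $Z=(L-1+U_1)/n$, $V=(\pi(L)-1+U_2)/n$, conditioning on distinct block indices, and the union bound $6/n$ on collisions. The only difference is the finish. Since $\rho\le 1$, your displayed inequality already gives $\frac13\le A/\binom n4+\frac6n$ directly, which is how the paper concludes, so the polynomial check $(n-1)(n-2)(n-3)(n+9)\ge n^4$ is unnecessary (it does hold for all $n\ge 12$, and the bound is trivial for $n\le 18$ anyway).
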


\begin{proof}
Label the leaves in the order of $C_1$ as $1,\ldots,n$, and let $\pi(i)$ be
the position of leaf $i$ in the order of $C_2$; thus
$\pi\in S_n$.  For $i\,<\,j\,<\,k\,<\,\ell$, the two caterpillars agree
on this quartet precisely when
\[
 \max(\pi(i),\pi(j))\,<\,\min(\pi(k),\pi(\ell))
 \quad\text{or}\quad
 \max(\pi(k),\pi(\ell))\,<\,\min(\pi(i),\pi(j)).
\]
Call such a four-element subset good.

For a discrete application of \eqref{eq:four-point-one-third}, choose
$L$ uniformly from $\{1,\ldots,n\}$, let $U_1,U_2$ be independent uniform
random variables on $[0,1]$, independent also of $L$, and set
\[
 Z\,=\,\frac{L-1+U_1}{n},
 \qquad
 V\,=\,\frac{\pi(L)-1+U_2}{n}.
\]
Both marginals are uniform on $[0,1]$, the second because $\pi$ is a
permutation.  Take four independent copies and let $\mathcal L$ be the event
that their four values of $L$ are distinct.  Conditional on $\mathcal L$, the
set of those four values is uniform over
$\binom{\{1,\ldots,n\}}4$; their order by $Z$ is their order in $C_1$, while
their order by $V$ is their order in $C_2$.  Hence
\[
 \mathbb P(\mathcal G\mid\mathcal L)
 \,=\,\frac{A(\pi)}{\binom n4},
\]
where $A(\pi)$ is the number of good four-element subsets.  A union bound gives
\[
 \mathbb P(\mathcal L^{\mathsf c})
 \,\leq\,\binom42\frac1n\,=\,\frac6n.
\]
Together with \eqref{eq:four-point-one-third}, this implies
\[
 \frac13
 \,\leq\,\mathbb P(\mathcal G)
 \,\leq\,\mathbb P(\mathcal G\mid\mathcal L)
       +\mathbb P(\mathcal L^{\mathsf c})
 \,\leq\,\frac{A(\pi)}{\binom n4}+\frac6n.
\]
Therefore
\[
 A(\pi)\,\geq\,\left(\frac13-\frac6n\right)\binom n4,
\]
which gives the asserted distance bound because
$\qd(C_1,C_2)\,=\,\binom n4-A(\pi)$.  Finally, with
$N\,=\,\binom n4$,
\[
 \score(C_1,C_2)
 \,=\,3A(\pi)-N
 \,\geq\,-\frac{18}{n}\binom n4
 \,\geq\,-\frac34n^3.
\]
\end{proof}

Thus Theorem~\ref{thm:caterpillar} can be replaced by
Proposition~\ref{prop:elementary-caterpillar}, removing flag algebras from the
caterpillar result and strengthening its $o(n^4)$ score error to an explicit
$O(n^3)$ bound.  Retaining the exact score bound
\[
 E\,=\,\frac{18}{n}\binom n4
 \,=\,\frac34(n-1)(n-2)(n-3)
\]
in Proposition~\ref{prop:common-root-reduction} gives
\begin{align*}
 \qd(T_1,T_2)
 &\,\leq\,\frac23\binom n4+
 \frac{4n}{3(n-4)}
 \left(\frac34(n-1)(n-2)(n-3)+2\binom{n-1}{3}\right)\\
 &\,=\,\frac23\binom n4+
 \frac{13n(n-1)(n-2)(n-3)}{9(n-4)}\\
 &\,\leq\,\frac23\binom n4+\frac{13}{9}n^3
 \qquad(n\,\geq\,8).
\end{align*}
The last inequality uses
$(n-1)(n-2)(n-3)\,\leq\,n^2(n-4)$, which holds for $n\,\geq\,5$.
Together with Lemma~\ref{lem:random-relabeling}, this gives
\[
 \frac23\binom n4
 \,\leq\,M_n
 \,\leq\,\frac23\binom n4+\frac{13}{9}n^3
 \qquad(n\,\geq\,8).
\]

\begingroup
\small
\setlength{\bibsep}{2pt}
\bibliographystyle{unsrtnat}
\bibliography{references}
\endgroup

\end{document}